\theoremstyle{definition}
\newtheorem{prop}{Proposition}
\newtheorem*{theorem*}{Theorem}
\newtheorem*{theorempf*}{{\bfseries The Perron-Frobenius Theorem} \citep{perron_1907}}
\newtheorem{lemm}[prop]{Lemma}
\newtheorem{conj}[prop]{Conjecture}
\numberwithin{prop}{subsection}
\title{Tropical Analysis of the Asymptotics of the Perron-Frobenius Eigenvector}
\author{Balazs Kustar}
\date{May 2019}
\begin{document}

\maketitle

\begin{abstract}
Asymptotic properties of matrices are, in general, difficult to analyze with classical mathematical techniques. In very specific cases, there is a well-known connection between the asymptotic behavior of a matrix's leading eigenvector and the corresponding "tropical" matrix, arising out of the $max$ and $min$ operations innate in tropical analysis. In this paper we examine a more general class of matrices, and explore the extent to which we can generalize the results using tropical techniques. We find that while the original results do not easily generalize, we can still make some useful statements about the asymptotic behavior in the general case, and can give a complete characterization for a larger class of matrices than previously examined.
\end{abstract}

\section{Introduction}

In statistical physics, one is often interested in modeling the behavior of a 1-dimensional system. Such a system can be described using its partition function \citep{santra}, which is written in terms of a matrix called its \textit{transfer matrix} with a real parameter $h$, the temperature of the system. This matrix has the form
$$(A_h)_{ij} = e^{(h^{-1}A_{ij})}$$
for some matrix A, of which the real parameters $A_{ij}$ represent potential terms plus interaction energy terms (when two adjacent sites are in states $i$ and $j$ respectively) \citep{reischuk}. The partition function depends on the trace of this matrix, and since the trace of a matrix is the sum of its eigenvalues, the spectrum of the transfer matrix can reveal important information about the system. A particularly interesting case is when the temperature $h$ goes to zero. Then the Perron Eigenvalue $\rho(A_h)$ is used to determine the free energy per site $\delta_h = h \log(\rho(A_h))$.

Spectral theory of tropical matrices plays an important role in the analysis of discrete event systems \citep{baccelli}. In fact, the asymptotic behavior of $\rho(A_h)$ (and thus the free energy of the system) can be expressed elegantly using tropical algebra under certain conditions. It is a known result that as $h\rightarrow0$, $h \log(\rho(A_h))$ is equal to the tropical max-plus eigenvalue of $A$, and when $A$ has only one critical tropical eigenvector, the eigenvector corresponding to $\rho(A_h)$ is equal to the tropical eigenvector of $A$ \citep{PhysMotiv}. In this paper we explore the behavior of that corresponding eigenvector in the general case and seek to answer whether or not a tropical connection still exists.

We first explore whether or not the convergence of the normalized Perron Eigenvector of $A_h$ is always to a critical tropical eigenvector of the base matrix $A$. Experiments quickly show that it is not, but it does always lie in the tropical eigenspace. We find that (Conjecture 3.0.1) the convengence appears to be determined by the shape of the tropical eigenspace (which is in turn determined by the set of critical tropical eigenvectors of A). As the tropical eigenspace is often determined by relatively few entries of A, we can perturb several entries of our base matrix without affecting the limit of the Perron Eigenvector. We also find that (Conjecture 3.0.2) when all tropical eigenvectors have form $(0, v_1+\alpha, v_2+\alpha, \dots, v_n+\alpha) \in \mathbb{TP}^{n-1}$ for $0 \leq \alpha \leq \beta \in \mathbb{R}$, we can give an exact characterization of the convergence.

We also attempt to apply the methods proposed in \citep{PertTheory}, as our matrices are of the form studied therein. However, we find that when $A$ has several critical classes, the algorithm in \citep{PertTheory} often fails to predict the Perron Eigenvector, giving instead an asymptotic characterization of another eigenvector, or none at all.

\section{Background}

\subsection{Tropical Algebra}

For our analysis we introduce the three \textit{tropical algebras}. The \textit{max-plus tropical semiring} (max+) is $\mathbb{R} \cup \{-\infty\}$ equipped with two operations $\oplus$ and $\odot$ defined as:
$$ a \oplus b = \max(a, b) $$
$$ a \odot b = a+b $$

This tropical semiring is associative and distributive, with additive identity $-\infty$ and multiplicative identity 0. This satisfies all ring axioms except for the existence of additive inverses, and so is a semiring. We will use $\mathbb{R}$ to refer to the base set of the tropical semirings for ease of notation.

The n-dimensional real vector space $\mathbb{R}^n$ is a module over the tropical semiring $(\mathbb{R}, \oplus, \odot)$, with the operations of coordinate-wise tropical addition:
$$(a_1, \dots , a_n) \oplus (b_1, \dots , b_n) = (max(a_1, b_1), \dots , max(a_n, b_n))$$
and tropical scalar multiplication:
$$\lambda \odot (a_1, \dots , a_n) = (a_1 + \lambda, \dots , a_n + \lambda)$$

We can define tropical matrix operations, exponents, and polynomials, etc. by replacing the classical addition and multiplication operations with the tropical analogs. Tropical $n \times n$ matrices have unique eigenvalues, with an associated eigenspace formed by the tropical convex hull of up to $n$ critical eigenvectors.

A tropical linear space $L$ in $\mathbb{R}^n$ consists of all tropical linear combinations of a fixed finite subset $\{a, b, \dots , c\} \subset \mathbb{R}^n$.
$$ L := span_{trop}(a,b,...,c) = \left\{ \lambda_a \odot a \oplus \lambda_b \odot b \oplus \dots \oplus \lambda_c \odot c : \lambda_a, \lambda_b, \dots \lambda_c \in \mathbb{R} \right\} $$
Note that $L$ is closed under tropical scalar multiplication: $ L = L + \mathbb{R}(1,1,\dots 1) $. We therefore choose to identify $L$ (and often, individual tropical vectors in $\mathbb{R}^n$) with its image in the \textit{tropical projective space}:
$$ \mathbb{TP}^{n-1} = \mathbb{R}^n / \mathbb{R}(1,1, ... ,1) $$

The \textit{min-plus tropical semiring} $(min+)$ is defined similarly, but with $ a \oplus b = \min(a, b) $ (and additive identity $\infty$), while the \textit{max-times tropical semiring} $(max*)$ defines $ a \odot b = ab$.

We will let $\lambda_{max+}(M)$ denote the max-plus eigenvalue of a matrix $M$, and $\sigma_{max+}(M)$ the corresponding eigenspace. Similarly, we define $\lambda_{max*}(M)$, $\sigma_{max*}(M)$, $\lambda_{min+}(M)$, and $\sigma_{min+}(M)$

All three tropical semirings satisfy the same ring axioms, and statements in one algebra have corresponding statements in the others. One can move between (min+) and (max+) by negating all values, or between (max+) and (max*) by exponentiation/logarithms.

\newpage

Example:
\[A \odot_{(max*)} x = \lambda \odot_{(max*)} x \]
\[\iff\]
\[log(A) \odot_{(max+)} log(x) = log(\lambda) \odot_{(max+)} log(x)\]

When $A$ is a matrix and $x$ a vector we use element-wise exponentiation or logarithms to achieve the equivalence.

\subsection{Visualizing Tropical Vectors}

We can use projections to $\mathbb{TP}^{n-1}$ to visualize tropical vectors using Euclidean space of a smaller dimension.
For a vector $v = (v_1, v_2 \dots , v_n) \in \mathbb{TP}^{n-1}_{max+}$,
$$ v = (v_1)^{\odot -1} \odot v = (0, v_2-v_1, \dots , v_n-v_1)$$
We use this fact to "normalize" tropical vectors so that the first coordinate is the tropical multiplicative identity, and project to $\mathbb{R}^{n-1}$ using the remaining coordinates.

\[
    v = \begin{bmatrix}
           v_1 \\
           v_2 \\
           \vdots \\
           v_n
         \end{bmatrix} \mapsto \begin{bmatrix}
           0 \\
           v_2-v_1 \\
           \vdots \\
           v_n-v_1
         \end{bmatrix} \mapsto \begin{bmatrix}
           v_2-v_1 \\
           \vdots \\
           v_n-v_1
         \end{bmatrix} \in \mathbb{R}^{n-1}
\]

For $n=3$ we have a projection to the Euclidean plane $\mathbb{R}^2$. This allows for easy visualization of the $n=3$ case.

\begin{figure}[H]
\includegraphics[width=7cm]{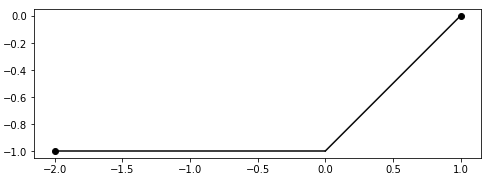}
\centering
\caption[Visualization of the tropical span]{
    Visualization of the tropical span of 
    $v_1 = \left[ \begin{smallmatrix} 0 \\ -2\\-1 \end{smallmatrix} \right]$
    and $v_2 = \left[ \begin{smallmatrix} 0 \\ 1\\0 \end{smallmatrix} \right]$
}
\centering
\end{figure}

\subsection{The Perron Eigenvalue}

\begin{theorempf*}

Let $A\in \mathbb{R}^{n \times n}$ be a matrix with strictly positive entries. Then the following statements hold:

\begin{enumerate}
\item There is a positive real number $r$, such that $r$ is an eigenvalue of $A$ and for any other eigenvalue $\lambda$ of $A$, $|\lambda| < r$.
\item The eigenspace associated to $r$ is one-dimensional.
\item There exists an eigenvector $v$ of $A$ (with eigenvalue $r$) such that all entries of $v$ are positive.
\item There are no other non-negative eigenvectors of $A$, other than positive multiples of $v$.

\end{enumerate}

\end{theorempf*}

We call this maximal eigenvalue the \textit{Perron Eigenvalue} $\rho(A)$, and the corresponding positive eigenvector the \textit{Perron Eigenvector} $\mathscr{L}(A)$.

\medskip

Let $M \in \mathbb{R}^{n \times n}$ be a matrix such that $M_{ij}> 0$ for all $i, j$. By the Perron-Frobenius theorem (P-F), $\exists \lambda = \rho(M), x \in \mathbb{R}^n$ such that $Mx = \lambda x$, $\lambda>0$, and $x_i>0$ $\forall i \in [n]$.

Let $M^{(k)}$ be the $k$-th Hadamard power of M. $(M^{(k)})_{ij} = (M_{ij})^k$. Since all $M_{ij} > 0$, we know that $M^{(k)} > 0$ and so the P-F theorem applies to it as well.

Let $\lambda_k$ be the Perron Eigenvalue $\rho(M^{(k)})$.
Then by P-F:
$$\exists x_k = \mathscr{L}(M^{(k)}) \text{ s.t. }\  (M^{(k)}x_k)_i = \lambda_k (x_k)_i \quad \forall i$$

For large $k$, the largest term in each row of M will dominate:
$$(M^{(k)}x_k)_i = \sum_j (M_{ij})^k*(x_k)_j \ \approx \max_j \left\{ (M_{ij})^k*(x_k)_j \right\}$$

So as $k \rightarrow \infty$:
$$\lambda_k (x_k)_i = (M^{(k)}x_k)_i = \max_j \left\{ (M_{ij})^k*(x_k)_j \right\}$$
$$ = (M^{(k)})_i \odot_{(max*)} x_k $$

This must hold for all $i$, so combining, we have:
$$ \lambda_k x_k = M^{(k)} \odot x_k $$

And so as $k \rightarrow \infty$, $\lambda_k$ must be a tropical (max-times) eigenvalue of $M^{(k)}$, with $x_k$ a corresponding tropical eigenvector.

\medskip

\begin{lemm}
$\lambda_{max*}(M^{(k)})=\lambda_{max*}(M)^k$
\end{lemm}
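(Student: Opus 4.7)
My plan is to produce an explicit max-times eigenvector of $M^{(k)}$ whose associated eigenvalue is $\lambda_{max*}(M)^k$, and then invoke uniqueness of the tropical eigenvalue (recalled in Section 2.1) to identify this quantity with $\lambda_{max*}(M^{(k)})$.

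Concretely, I would start from any max-times eigenvector $v$ of $M$ corresponding to $\lambda := \lambda_{max*}(M)$, so that $\max_j M_{ij} v_j = \lambda v_i$ for every $i$. Since $M$ has strictly positive entries, one can arrange $v>0$. I would then propose $v^{(k)}$, the coordinate-wise Hadamard $k$-th power of $v$, as a candidate eigenvector of $M^{(k)}$ and verify the identity $M^{(k)} \odot_{max*} v^{(k)} = \lambda^k \odot_{max*} v^{(k)}$. The computational core is the step $\max_j (M_{ij} v_j)^k = (\max_j M_{ij} v_j)^k$, i.e.\ that $(\,\cdot\,)^k$ commutes with $\max$ on the nonnegative reals; once this is in hand, the eigenvalue equation drops out immediately and uniqueness gives the lemma.

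The only delicate point along this route is justifying that commutation, which is where the positivity of $v$ and of the entries of $M$ is essential. If that caused difficulty, the fallback I would pursue is the $\log/\exp$ bridge between the max-times and max-plus semirings noted earlier: entrywise one has $\log(M^{(k)}) = k\log(M)$, so the claim reduces to the max-plus identity $\lambda_{max+}(kA) = k\,\lambda_{max+}(A)$, which follows immediately from the maximum-cycle-mean description of $\lambda_{max+}$ since scaling every entry of $A$ by the positive constant $k$ scales every cycle mean by the same factor. Either way, I expect no real obstacle; the proof is essentially an unpacking of definitions.
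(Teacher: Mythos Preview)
Your argument is correct. The core computation $\max_j(M_{ij}v_j)^k=(\max_j M_{ij}v_j)^k$ holds because $t\mapsto t^k$ is strictly increasing on the nonnegative reals, and uniqueness of the tropical eigenvalue then pins down $\lambda_{max*}(M^{(k)})$ as $\lambda^k$. Your fallback via $\log/\exp$ and the max-plus cycle mean also works.

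The paper, however, does not go through an eigenvector at all: it argues directly from the maximum-cycle-mean description of $\lambda_{max*}$. Since $ab\le cd\iff a^kb^k\le c^kd^k$ for positive reals, any cycle attaining the maximal geometric mean in $M$ also attains it in $M^{(k)}$, and the mean itself is raised to the $k$th power. Your fallback is essentially this same argument transported to max-plus. Your primary route has the pleasant side effect of producing, for free, one direction of the companion lemma ($v\in\sigma_{max*}(M)\Rightarrow v^{(k)}\in\sigma_{max*}(M^{(k)})$); on the other hand it presupposes the existence of a positive eigenvector and the uniqueness of the eigenvalue as inputs, whereas the cycle-mean computation is entirely self-contained.
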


\begin{lemm}
$v \in \sigma_{max*}(M) \iff v^{(k)} \in \sigma_{max*}(M^{(k)})$
\end{lemm}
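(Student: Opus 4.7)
The plan is to unfold the tropical eigenvector condition into its concrete max-times form and then exploit the fact that the map $t \mapsto t^k$ is monotone on positive reals, so it commutes with $\max$. Since $M$ has strictly positive entries and the Perron-type eigenvectors under consideration are positive, every quantity in sight lives in $\mathbb{R}_{>0}$, and there are no sign issues.

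First I would rewrite $v \in \sigma_{max*}(M)$ as the coordinate-wise statement
\[
  \max_{j} \{ M_{ij} \cdot v_j \} = \lambda_{max*}(M) \cdot v_i \qquad \forall i,
\]
using the definition of $\odot_{(max*)}$ given in the Background. Next I would raise both sides to the $k$-th power. On the right this is immediate, giving $\lambda_{max*}(M)^k \cdot v_i^k$. On the left, because $t \mapsto t^k$ is strictly increasing on positive reals, it commutes with $\max$, yielding $\max_j \{ M_{ij}^k \cdot v_j^k \} = \max_j \{ (M^{(k)})_{ij} \cdot (v^{(k)})_j \}$. Combining, we obtain
\[
  M^{(k)} \odot_{(max*)} v^{(k)} = \lambda_{max*}(M)^k \odot_{(max*)} v^{(k)}.
\]
Finally I would invoke Lemma 2.3.1 (the previous lemma) to identify $\lambda_{max*}(M)^k$ with $\lambda_{max*}(M^{(k)})$, which certifies $v^{(k)} \in \sigma_{max*}(M^{(k)})$ and proves the forward direction.

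For the reverse direction I would run the same argument backwards using the inverse map $t \mapsto t^{1/k}$, which is also monotone on positive reals and so again commutes with $\max$; starting from $M^{(k)} \odot_{(max*)} v^{(k)} = \lambda_{max*}(M^{(k)}) \odot_{(max*)} v^{(k)}$, taking $k$-th roots coordinate-wise recovers the max-times eigenvector equation for $M$ with eigenvalue $\lambda_{max*}(M^{(k)})^{1/k} = \lambda_{max*}(M)$ (by Lemma 2.3.1 again).

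There is no real obstacle here; the only subtle point is the appeal to positivity that licenses commuting $\max$ with $(\cdot)^k$ and $(\cdot)^{1/k}$. This is guaranteed by the standing assumption $M_{ij} > 0$ together with the fact that a max-times eigenvector of a positive matrix can itself be taken positive (for any $i$, $\lambda v_i = \max_j M_{ij} v_j > 0$ as soon as some $v_j > 0$, and a nontrivial eigenvector has at least one positive entry). Once positivity is in hand, both implications reduce to the single observation that monotone bijections of $\mathbb{R}_{>0}$ commute with $\max$.
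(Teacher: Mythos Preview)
Your argument is correct, but it follows a different route from the paper's. The paper works structurally via the Kleene star: it observes that order relations among products of positive entries are preserved under $k$-th powers, deduces $(M^*)^{(k)} = (M^{(k)})^*$, and concludes that the \emph{critical} eigenvectors of $M^{(k)}$ are exactly the Hadamard $k$-th powers of those of $M$ (since critical eigenvectors are read off the columns of the Kleene star). Your approach instead bypasses the Kleene star entirely and verifies the eigenvector equation directly, using only that $t \mapsto t^k$ is a monotone bijection on $\mathbb{R}_{>0}$ and hence commutes with $\max$. Your route is shorter and more elementary for the statement as written; the paper's route additionally pins down the correspondence at the level of the extremal generators of the eigenspace, which is slightly more information but not needed for the lemma itself.
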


\begin{proof}
Since M is a positive matrix, for any $a,b,c,d \in \{M_{ij} : i,j \in [n]\}$
$$ ab \leq cd \iff a^kb^k \leq c^kd^k$$
This means that any cycle that achieved the maximum mean cost in $M$, must also achieve it in $M^{(k)}$.
Let $\sigma = (\sigma_1, \sigma_2, \dots , \sigma_s)$ be a cycle that achieves the maximum mean cost. Then:
$$ \lambda_{max*}(M)^k = \left( \prod_{i=2}^{s} M_{\sigma_{i-1}\sigma_i}\right)^{\frac{1}{s}k} = \left( \prod_{i=2}^{s} (M_{\sigma_{i-1}\sigma_i})^k\right)^{\frac{1}{s}} = \lambda_{max*}(M^{(k)})$$

Let $M^* = M \oplus M^{\odot 2} \oplus  M^{\odot 3} ... \oplus M^{\odot n}$ be the Kleene star of $M$, so that the tropical critical eigenvectors of M can be read off of the columns of $M^*$

A similar argument shows that $(M^*)^{(k)} = (M^{(k)})^*$, and therefore the eigenvectors of $M^{(k)}$ are exactly the $k$'th hadamard powers of eigenvectors of $M$.

\end{proof}

We can then conclude that as $k \rightarrow \infty$:
$$ \lambda_k^{1/k} = \lambda_{max*}(M) $$
$$ x_k^{(1/k)} \in  \sigma_{max*}(M)$$

Thus when $\sigma_{max}(M)$ has only a single basis element, it must be equal to $\lim_{k \rightarrow \infty}\left(x_k^{(1/k)}\right)$ (in projective space).

We now examine the asymptotics of the transfer matrices defined above. For convenience we will consider instead the matrices $A_k = exp(kA)$ as $k \rightarrow \infty$.

\bigskip

\begin{prop}
Let $A \in \mathbb{R}^{n \times n}$ be a matrix. The limit of the normalized Perron Eigenvalue of $A_k = exp(kA)$ as $k \rightarrow \infty$  is equal to the tropical max-plus eigenvalue of $A$.
\end{prop}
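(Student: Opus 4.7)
The plan is to deduce the proposition directly from the machinery developed earlier in this subsection. Setting $M := \exp(A)$ entrywise produces a strictly positive matrix, so Perron-Frobenius applies, and $A_k = \exp(kA) = M^{(k)}$ is exactly its $k$-th Hadamard power. Hence $\rho(A_k) = \rho(M^{(k)})$, and the derivation culminating in $\lambda_k^{1/k} \to \lambda_{max*}(M)$ applies verbatim, giving $\rho(A_k)^{1/k} \to \lambda_{max*}(M)$ as $k \to \infty$.

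Next I translate this from the (max,$*$) world into the (max,$+$) world using the $\log$/$\exp$ equivalence recorded in Section 2.1. Entrywise, $\log M = A$, and under the correspondence a max-times eigenpair of $M$ becomes a max-plus eigenpair of $A$, so $\log \lambda_{max*}(M) = \lambda_{max+}(A)$. Taking logarithms in the limit above then yields
$$\frac{1}{k}\log\rho(A_k) \;\longrightarrow\; \lambda_{max+}(A),$$
which is precisely the ``normalized Perron eigenvalue'' the proposition refers to (it is consistent with the formulation $\delta_h = h\log\rho(A_h)$ from the introduction under the substitution $k = 1/h$).

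The genuine technical content, and what I expect to be the main obstacle, is making rigorous the informal step $\sum_j (M_{ij})^k (x_k)_j \approx \max_j (M_{ij})^k (x_k)_j$ that the preceding derivation takes as a heuristic. I would handle this with a two-sided sandwich: because $\max \leq \sum \leq n\cdot\max$, the Perron identity $\rho(M^{(k)})(x_k)_i = \sum_j (M^{(k)})_{ij}(x_k)_j$ together with the Collatz-Wielandt characterization of the tropical eigenvalue gives $\rho(M^{(k)})/n \leq \lambda_{max*}(M^{(k)}) \leq \rho(M^{(k)})$. Combined with the lemma $\lambda_{max*}(M^{(k)}) = \lambda_{max*}(M)^k$, taking $k$-th roots sandwiches $\rho(M^{(k)})^{1/k}$ between $\lambda_{max*}(M)$ and $n^{1/k}\,\lambda_{max*}(M)$, and $n^{1/k}\to 1$ closes the squeeze. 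Once this technical wrinkle is settled, the rest is bookkeeping.
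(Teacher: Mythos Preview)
Your proof is correct and follows exactly the paper's approach: set $M=\exp(A)$, invoke the earlier conclusion $\rho(M^{(k)})^{1/k}\to\lambda_{max*}(M)$, then pass to max-plus via the logarithm to obtain $\lambda_{max+}(A)$. Your sandwich argument in the final paragraph actually goes further than the paper, which simply cites the preceding heuristic $\sum\approx\max$ derivation without ever tightening it; the paper's own proof is just the bookkeeping you describe.
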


\begin{proof}$ $\newline
We are interested in describing $\lambda_{PF} = \lim_{k \rightarrow \infty}\left(\frac{1}{k} \log(\rho(A_k))\right)$. Let $M = exp(A)$ so that $A_k = M^{(k)}$. 

$(M)_{ij} = e^{A_{ij}} > 0 \implies M$ is a positive matrix and satisfies the conditions above.
$$ \lim_{k \rightarrow \infty}\left(\frac{1}{k} \log(\rho(A_k)) \right)= \lim_{k \rightarrow \infty}\left(\log(\rho(M^{(k)})^{\frac{1}{k}}) \right) $$
$$ = \log(\lambda_{max*}(M))$$
\smallskip
$$\text{Recall that by taking the logarithm we can convert from max-times to max-plus algebra:}$$
\smallskip
$$ = \lambda_{max+}(\log(M))$$
$$ = \lambda_{max+}(\log(exp(A)))$$
$$ = \lambda_{max+}(A)$$
\end{proof}

\subsection{The Perron Eigenvector}

While the Perron Eigenvalue of transfer matrices can be described quite simply using tropical mathematics, The Perron Eigenvector is less straightforward. We first examine the normalized Perron Eigenvector $\mathscr{P}_k = \frac{1}{k}\log(\mathscr{L}(A_k))$ using the same tools from above.
\newline

\begin{prop}
$\frac{1}{k}\log(\mathscr{L}(A_k)) \in \sigma_{max+}(A)$ as $k \rightarrow \infty$
\end{prop}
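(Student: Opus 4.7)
The plan is to reduce the claim to the max-times statement already developed in the excerpt, and then transport it to max-plus via the logarithm correspondence. Setting $M = \exp(A)$ so that $A_k = M^{(k)}$, the Perron eigenvector $\mathscr{L}(A_k)$ coincides with $\mathscr{L}(M^{(k)}) = x_k$, so it suffices to analyze $x_k$.

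First I would invoke the asymptotic identity derived in the previous subsection: as $k \to \infty$, each row-sum $\sum_j (M_{ij})^k (x_k)_j$ is dominated by its maximum term, so the ordinary eigenvalue equation $\lambda_k x_k = M^{(k)} x_k$ becomes the max-times tropical eigenvalue equation $\lambda_k x_k = M^{(k)} \odot_{(max*)} x_k$. Combined with Lemma 2.3.2, which says that the max-times eigenvectors of $M^{(k)}$ are exactly the $k$-th Hadamard powers of the max-times eigenvectors of $M$, this forces $x_k^{(1/k)} \in \sigma_{max*}(M)$ in the limit.

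Next I would pass from max-times to max-plus using the element-wise logarithm identity recalled in Section 2.1. Taking logarithms sends $x_k^{(1/k)}$ to $\frac{1}{k}\log(x_k) = \mathscr{P}_k$ and sends $M$ to $\log(\exp(A)) = A$. Under this identification $\sigma_{max*}(M)$ maps to $\sigma_{max+}(A)$, and the conclusion $\mathscr{P}_k \in \sigma_{max+}(A)$ in the limit drops out, which is exactly the proposition.

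The main technical obstacle is making rigorous the word ``becomes'' in the first step: for any finite $k$ the tropical identity $\lambda_k x_k = M^{(k)} \odot_{(max*)} x_k$ holds only approximately. I would quantify this via the elementary bound $\max_j a_j \le \sum_{j=1}^n a_j \le n \max_j a_j$ applied to the positive terms $(M_{ij})^k (x_k)_j$, which yields an additive error of at most $\log n$ per coordinate after taking logarithms. Dividing by $k$ then shows that the tropical projective distance from $\mathscr{P}_k$ to $\sigma_{max+}(A)$ tends to zero, which is the precise meaning of the membership statement in the limit. One should also verify that the $\mathbb{TP}^{n-1}$-normalized entries of $\mathscr{P}_k$ stay bounded so that subsequential limits exist and remain in the eigenspace; this follows from the positivity of $A_k$ and the Perron-Frobenius normalization of $x_k$.
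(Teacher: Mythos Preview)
Your proposal is correct and follows the same route as the paper: set $M=\exp(A)$, use the already-established fact that $x_k^{(1/k)}\in\sigma_{max*}(M)$ in the limit, and then transport to max-plus via the elementwise logarithm identity $\sigma_{max*}(M)\leftrightarrow\sigma_{max+}(A)$. The paper's own proof stops there; your added paragraph quantifying the approximation via $\max_j a_j\le\sum_j a_j\le n\max_j a_j$ and checking boundedness in $\mathbb{TP}^{n-1}$ supplies rigor that the paper leaves implicit.
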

\begin{proof}
$$(\mathscr{L}(M^{(k)}))^{(1/k)} \in \sigma_{max*}(M) \implies \log \left( (\mathscr{L}(M^{(k)}))^{(1/k)} \right) \in \sigma_{max+}(\log(M)) $$
$$ \implies \frac{1}{k}\left( \log(\mathscr{L}(M^{(k)})) \right) \in \sigma_{max+}(A) $$
$$ \implies \frac{1}{k}\log(\mathscr{L}(A_k)) \in \sigma_{max+}(A) $$
\end{proof}
When $\sigma_{max+}(A)$ has only one generating eigenvector, we can fully characterize the asymptotics of the eigenvector (up to projection to $\mathbb{TP}^{n-1}$). But that is a strong requirement on $A$, and one that is relatively difficult to define classically.

\subsection{Perturbation Theory}
In \citep{PertTheory} the authors give an algorithm for describing the first order asymptotics of eigenvectors of matrices of the form $\mathcal{A}_\epsilon$ such that
$$ (\mathcal{A}_\epsilon)_{ij} = a_{ij}\epsilon^{B_{ij}} $$
as the real parameter $\epsilon > 0$ tends to 0. This relies on a decomposition of the base matrix B using $(min+)$ algebra. A sequence of matrices is generated using the repeated Schur complement of the tropical critical classes, the nodes that lay on a cycle that achieves the eigenvalue as the mean cycle length.

Let $C \subset L$ be finite sets, and let $N = L \setminus C$. If $A$ is an $L \times L$
matrix with entries in $\mathbb{R}$, the min-plus Schur complement of $C$ in $A$ is defined as:
$$Schur(C, A) = A_{NN} \oplus A_{NC} (A_{CC})^* A_{CN} $$

Then the collection of eigenvalues of each iteration of the Schur complement is used to normalize the base matrix $B$ to obtain $\hat{B}$. Then $\hat{B}^*$ is used to predict the asymptotic behavior of eigenvalues of $\mathcal{A}_\epsilon$ in the form of a weight vector $w$ and an exponent vector $v$ so that a predicted eigenvalue $V_\epsilon$ has asymptotic behavior (in terms of $\epsilon$):
$$ V_\epsilon = (w_1\epsilon^{v_1}, w_2\epsilon^{v_2}, \dots, w_n\epsilon^{v_n})$$
For a full description of the algorithm, see sections 5-6 of \citep{PertTheory}.

The prediction relies on a choice of a level of Schur decomposition $l$, as well as a choice of (classical) eigenvalue of the corresponding matrix and a choice of vector from the columns of $\hat{B}^*$. Of these choices, many fail to give any characterization of an eigenvector, as when the output vector $w$ has $w_i=0$, Theorem 6.1 states that:

$$\frac{(V_\epsilon)_i}{(V_\epsilon)_j} \propto 0\epsilon^{v_i-v_j}$$

Which does not give a full description of the vector. When $w$ is non-zero for all entries, $V_\epsilon$ does predict an eigenvector of $A_\epsilon$ up to a multiplicative constant.

\section{Experiments}

We focus our experiments on the general case where $A$ has several critical classes. For visualization, we narrow our focus to the $n=3$ case, as it is easy to plot points projected to $\mathbb{TP}^2$ using the Euclidean plane.

We first seek to determine whether the convergence of the normalized P-F eigenvector is simply to one of the critical eigenvectors of A.

\begin{figure}[H]
\minipage{0.32\textwidth}
  \includegraphics[width=\linewidth]{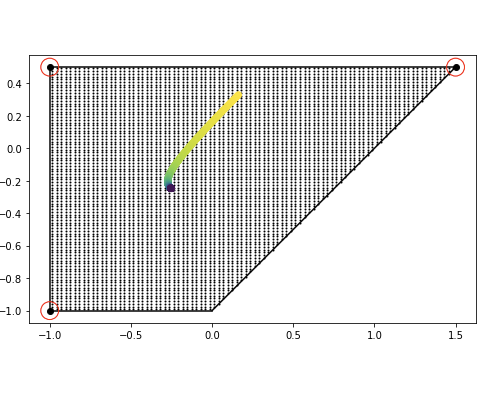}
  \caption[]{$A = \left[ \begin{smallmatrix} 0 & -2.5 & -0.5\\ -1 & 0 & -1.5 \\ -1 & -1 & 0\end{smallmatrix}\right]\newline \hphantom{xxxx} \mathscr{P}_{\infty}(A) = \left[ \begin{smallmatrix} 0 \\ -0.25\\-0.25 \end{smallmatrix} \right]$}\label{figure1}
\endminipage\hfill
\minipage{0.32\textwidth}
  \includegraphics[width=\linewidth]{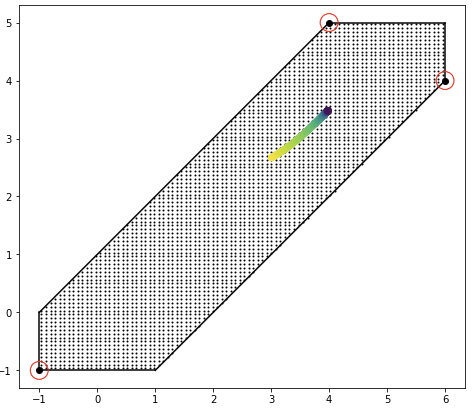}
  \caption[]{$A = \left[ \begin{smallmatrix} 0 & -6 & -5\\ -1 & 0 & -1 \\ -1 & -2 & 0\end{smallmatrix}\right]\newline \hphantom{xxxxx} \mathscr{P}_{\infty}(A) = \left[ \begin{smallmatrix} 0 \\ 4\\3.5 \end{smallmatrix} \right]$}\label{figure2}
\endminipage\hfill
\minipage{0.32\textwidth}%
  \includegraphics[width=\linewidth]{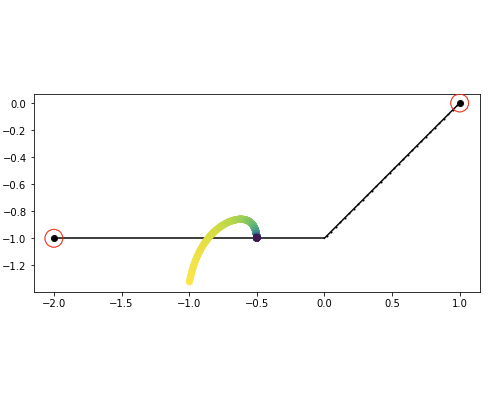}
  \caption[]{$A = \left[ \begin{smallmatrix} 0 & -1 & -1\\ -4 & 0 & -1 \\ -1 & -1 & -4\end{smallmatrix}\right]\newline \hphantom{xxxx} \mathscr{P}_{\infty}(A) = \left[ \begin{smallmatrix} 0 \\ -0.5\\-1 \end{smallmatrix} \right]$}\label{figure3}
\endminipage
\end{figure}

\begin{figure}[H]
\includegraphics[width=5cm]{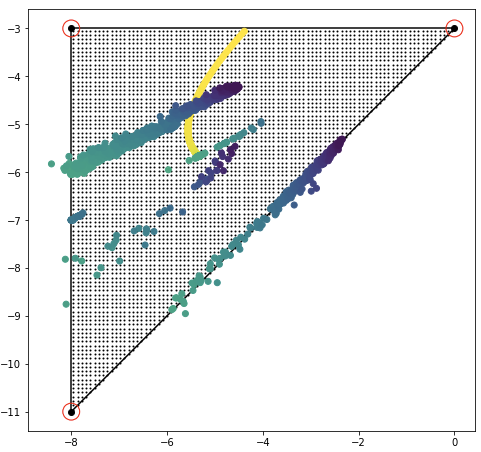}
\centering
\caption{Loss of machine precision creates undesired artifacts after convergence.}
\centering
\end{figure}

In the figures in this section, the shaded area is the $(max+)$ eigenspace of the base matrix A. The critical eigenvectors are circled in red. The normalized Perron Eigenvector for $A_k$ as is shown as a sequence of points colored yellow $\rightarrow$ purple as $k\rightarrow \infty$. The iteration is cut off around $k=30$ due to loss of machine precision causing undesired behavior (see figure 5).

An immediate observation from the plots are that the Perron Eigenvector does converge to a single point, with no subsequences even when there are multiple critical eigenvectors of the base matrix, so the limit
$$ \mathscr{P}_{\infty}(A) =  \lim_{k \rightarrow \infty} \left( \frac{1}{k}\log(\mathscr{L}(A_k)) \right) $$
is well-defined.

We can see that $\mathscr{P}_{\infty}(A) \in \sigma_{max+}(A)$ as predicted, however it does not tend to converge to one of the critical eigenvectors, except in the case of conjecture 1. Rather, the convergence point tends to be a (not necessarily a homogenous) linear combination of all critical eigenvectors.

\begin{figure}[H]
\minipage{0.49\textwidth}
  \includegraphics[width=\linewidth]{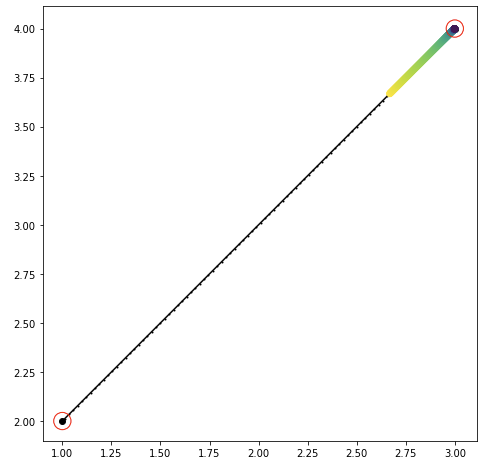}
  \caption[]{$A = \left[ \begin{smallmatrix} 0 & -3 & -2\\ 1 & 0 & -1 \\ 2 & 1 & 0\end{smallmatrix}\right] \mathscr{P}_{\infty}(A) = \left[ \begin{smallmatrix} 0 \\ 3\\4 \end{smallmatrix} \right]$}\label{figure4}
\endminipage\hfill
\minipage{0.49\textwidth}
  \includegraphics[width=\linewidth]{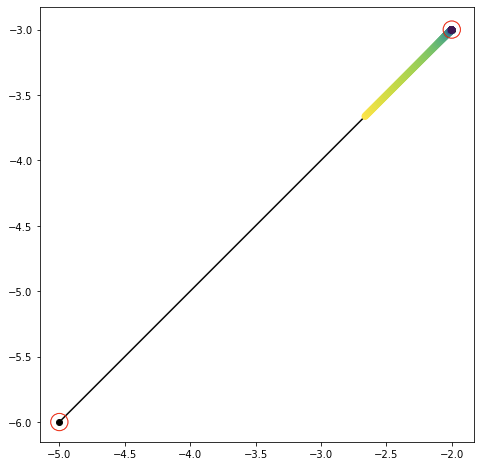}
  \caption[]{$A = \left[ \begin{smallmatrix} 0 & 1 & 3\\ -5 & 0 & 1 \\ -6 & -1 & 0\end{smallmatrix}\right] \mathscr{P}_{\infty}(A) = \left[ \begin{smallmatrix} 0 \\ -2\\-3 \end{smallmatrix} \right]$}\label{figure5}
\endminipage
\end{figure}

We now state two conjectures without proof that are supported by all experimentation. These conjectures both appear to generalize to $n \times n$ matrices, but the exploration becomes more difficult without visualization.

\begin{conj} 
Let $v = (0,v_1,v_2) \in \sigma_{max+}(A)$. If for any other $w = (0,w_1,w_2) \in \sigma_{max+}(A)$ there exists an $\alpha > 0 \in \mathbb{R}$ such that $v_1=w_1+\alpha$ and $v_2=w_2+\alpha$, then $v = \mathscr{P}_{\infty}(A)$
\end{conj}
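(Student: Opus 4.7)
The starting observation is Proposition 2.4.1: $\mathscr{P}_\infty(A) \in \sigma_{max+}(A)$. Under the hypothesis of the conjecture the eigenspace is one-dimensional, parametrized as $\{v - (0, \alpha, \alpha) : \alpha \in [0, \alpha_{\max}]\}$, so we may write $\mathscr{P}_\infty(A) = v - (0, \beta, \beta)$ for some $\beta \in [0, \alpha_{\max}]$, and the task reduces to proving $\beta = 0$.

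To pin down $\beta$, the plan is to extract sub-leading asymptotics of the Perron eigenvalue equation. Let $y^{(k)} = \frac{1}{k}\log \mathscr{L}(A_k)$ and let $y^\infty = \lim_k y^{(k)}$. The equation $\sum_j e^{k(A_{ij} + y_j^{(k)})} = \lambda_k e^{k y_i^{(k)}}$ has each side dominated, up to exponentially small corrections, by the terms with $j$ in the saturation set $J_i := \{j : A_{ij} + y^\infty_j = \lambda_{max+}(A) + y^\infty_i\}$. Isolating the common leading exponential factor from each side and setting $w_j := \lim_k e^{k(y_j^{(k)} - y_j^\infty)}$ and $C := \lim_k \lambda_k e^{-k\lambda_{max+}(A)}$, the eigenvalue equation reduces in the limit to the linear identity
\[
\sum_{j \in J_i} w_j \;=\; C \, w_i, \qquad i = 1, \dots, n,
\]
i.e.\ $Bw = Cw$, where $B_{ij} = \mathbf{1}[j \in J_i]$ is the $\{0,1\}$ saturation matrix of $y^\infty$. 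Because $\mathscr{L}(A_k) > 0$ entrywise, the limit vector $w$ must be strictly positive, and by classical Perron--Frobenius applied to the non-negative matrix $B$, every strongly connected component of the saturation graph must have directed access to some SCC achieving the Perron eigenvalue of $B$.

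The core step is then analyzing how the saturation matrix $B = B_\beta$ for the candidate $y^\infty = v - (0,\beta,\beta)$ depends on $\beta$. The geometric hypothesis constrains $A$ rigidly: the eigenspace degenerating along direction $(0,1,1)$ forces the self-loop at the first coordinate to be critical ($A_{11} = \lambda_{max+}(A)$), while at $\beta = 0$ the entries $A_{12} + v_1$ and $A_{13} + v_2$ just saturate row~$1$. When $\beta = 0$, row $1$ of $B_0$ contains edges into the critical block on $\{2,3\}$, so every node reaches the dominant SCC and $B_0$ does admit a strictly positive Perron eigenvector. Once $\beta > 0$, those row-$1$ off-diagonal saturations drop strictly below $\lambda_{max+}(A)$ and disappear from $B_\beta$; node $1$ becomes disconnected from $\{2,3\}$ in the saturation digraph, the Perron eigenvalue still comes from the $\{2,3\}$-block, and any corresponding eigenvector must have $w_1 = 0$, contradicting strict positivity. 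Hence $\beta = 0$ and $\mathscr{P}_\infty(A) = v$.

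The main obstacle is precisely this final combinatorial claim: rigorously showing that the geometric hypothesis forces $B_\beta$ to fail to admit a strictly positive Perron eigenvector for every $\beta > 0$, handling in particular the degenerate sub-cases in which the $\{2,3\}$-block itself has additional reducibility or the two critical classes of $A$ interact in less obvious ways. A secondary technical hurdle is justifying the sub-leading expansion---namely, that $k(y_j^{(k)} - y_j^\infty)$ actually converges and that the limit $w_j$ is strictly positive---which I expect to follow from a uniform Laplace-type argument on the fixed-point equation for $y^{(k)}$, but making this airtight in full generality (including edge cases where some $J_i$ is a singleton) is nontrivial.
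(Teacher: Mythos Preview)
The paper does not prove this statement: it is explicitly introduced as a conjecture ``without proof'' and is supported only by numerical experiments, so there is no argument in the paper to compare yours against. What you have written is therefore an attempted proof of an open statement, and it should be judged on its own terms.

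Your strategy---reduce to the saturation (``hungarian'') $0$--$1$ matrix $B_\beta$ via the sub-leading term of the Perron eigenvalue equation, and then argue that $B_\beta$ admits a strictly positive eigenvector only at $\beta=0$---is a natural one and is in the spirit of the Akian--Bapat--Gaubert analysis the paper cites. The deduction that the hypothesis forces $A_{11}=\lambda_{max+}(A)$ is correct: for any interior point of the segment, row~$1$ can only saturate through $j=1$.

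However, the combinatorial endgame has a genuine hole that goes beyond routine casework. For $0<\beta<\alpha_{\max}$ the matrix $B_\beta$ has the block form
\[
B_\beta \;=\; \begin{pmatrix} 1 & 0 \\ 0 & M \end{pmatrix},\qquad M\in\{0,1\}^{2\times 2},
\]
and your argument needs the Perron root of $M$ to exceed $1$. But if the second critical class is the $2$-cycle $2\leftrightarrow 3$ (so $A_{22},A_{33}<\lambda$ while $A_{23}+A_{32}=2\lambda$), then $M=\left(\begin{smallmatrix}0&1\\1&0\end{smallmatrix}\right)$ with Perron root exactly~$1$, and $B_\beta$ \emph{does} admit strictly positive eigenvectors $(a,b,b)$ for every interior~$\beta$. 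This configuration is fully consistent with the hypothesis of the conjecture, so the first-order saturation condition cannot by itself single out $\beta=0$; a higher-order expansion (or an entirely different mechanism) is needed here. Your acknowledged ``secondary hurdle'' is also not minor: without a proof that $k\bigl(y_j^{(k)}-y_j^\infty\bigr)$ converges to a finite limit, the vector $w$ and the identity $Bw=Cw$ are not even defined, and the Perron-positivity of $\mathscr{L}(A_k)$ does not by itself prevent $w_j=0$ in the limit.
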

When the eigenspace is a classical line segment in our projected space, $\mathscr{P}_{\infty}(A)$ lies on the critical eigenvector closest to $(0, \infty, \infty)$, as can be seen in figures 6-7. 

As a generalization, we claim that if all eigenvectors of A have the form $(0, v_1+\alpha, v_2+\alpha, \dots, v_n+\alpha)$ for $0 \leq \alpha \leq \beta$, then $\mathscr{P}_{\infty}(A) = (0, v_1+\beta, v_2+\beta, \dots, v_n+\beta)$

\begin{figure}[H]
\minipage{0.49\textwidth}
  \includegraphics[width=\linewidth]{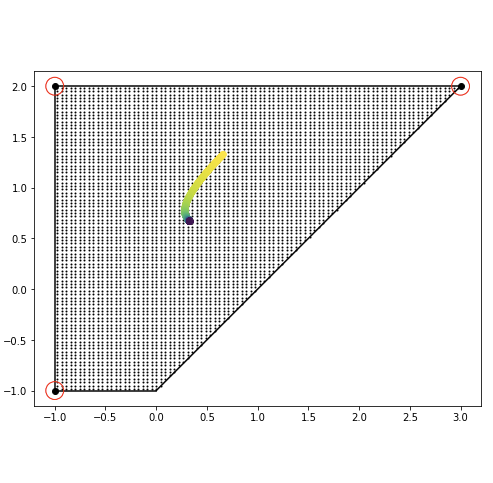}
  \caption[]{$A = \left[ \begin{smallmatrix} 0 & -4 & -2\\ 1 & 0 & -3 \\ -1 & -1 & 0\end{smallmatrix}\right] \mathscr{P}_{\infty}(A) = \left[ \begin{smallmatrix} 0 \\ 1/3\\2/3 \end{smallmatrix} \right]$}\label{figure6}
\endminipage\hfill
\minipage{0.49\textwidth}
  \includegraphics[width=\linewidth]{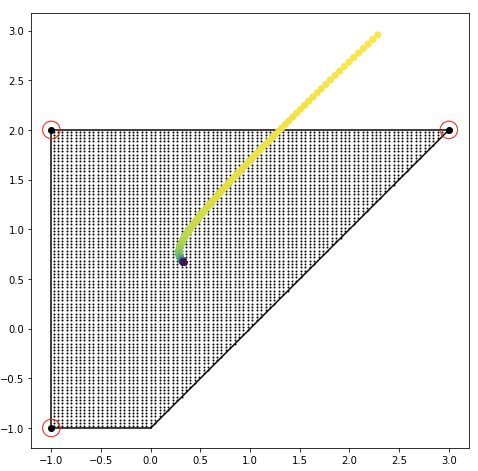}
  \caption[]{$A = \left[ \begin{smallmatrix} 0 & -9 & -2\\ 1 & 0 & -3 \\ -1 & -1 & 0\end{smallmatrix}\right] \mathscr{P}_{\infty}(A) = \left[ \begin{smallmatrix} 0 \\ 1/3\\2/3 \end{smallmatrix} \right]$}\label{figure7}
\endminipage
\end{figure}

\newpage

\begin{conj}
$\sigma_{(max+)}(A) = \sigma_{(max+)}(B) \implies \mathscr{P}_{\infty}(A) = \mathscr{P}_{\infty}(B)$
\end{conj}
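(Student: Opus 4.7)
The plan is to establish the conjecture by showing that the map $A \mapsto \mathscr{P}_\infty(A)$ factors through $A \mapsto \sigma_{(max+)}(A)$. By Proposition 2.4.1 we already know $\mathscr{P}_\infty(A) \in \sigma_{(max+)}(A)$, so the question becomes which point of the eigenspace is selected; I want to argue that this selection depends only on the eigenspace itself and not on the remaining entries of $A$. First, I would pin down the structure of the set $\mathcal{E}(\Sigma) = \{A \in \mathbb{R}^{n \times n} : \sigma_{(max+)}(A) = \Sigma\}$ for a fixed tropical linear space $\Sigma$. Matrices in $\mathcal{E}(\Sigma)$ must share the same maximal-mean cycles and the same critical columns of $A^*$ (up to tropical scaling), while the non-critical entries are free to vary within a large open region, so long as no new cycle attains the maximum mean weight.

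Next, I would show that $\mathcal{E}(\Sigma)$ is path-connected, so that any $A, B \in \mathcal{E}(\Sigma)$ can be joined by a continuous path inside $\mathcal{E}(\Sigma)$ -- typically obtained by linearly interpolating non-critical entries while freezing the critical backbone. Given connectedness, the conjecture reduces to proving that $\mathscr{P}_\infty$ is locally constant along such a path. My approach here would be a power-iteration analysis: $\mathscr{L}(\exp(kA))$ is the limiting direction of $\exp(kA)^m v_0$ as $m \to \infty$, and each entry of $\exp(kA)^m$ is a sum $\sum_p e^{k\, \mathrm{wt}(p)}$ over length-$m$ paths $p$ in the weighted graph of $A$. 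Taking $k \to \infty$, only paths that spend their bulk in the critical classes contribute at leading exponential order, and the resulting contributions are captured by the Kleene star, which is invariant along the path by construction.

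The main obstacle will be controlling the interchange of the $k \to \infty$ and $m \to \infty$ limits, and showing that the pre-exponential factors coming from non-critical entries do not accumulate into a perturbation of the limiting direction inside $\Sigma$. A cleaner route may be to seek an intrinsic characterization of $\mathscr{P}_\infty(A)$ purely in terms of the geometry of $\sigma_{(max+)}(A)$, for example as the solution of a Collatz--Wielandt-type variational problem over the tropical polytope, or as a distinguished point determined by the cell-complex structure of $\Sigma$. If such a characterization can be extracted, the conjecture follows immediately, and Conjecture 3.0.1 is recovered as the special case where $\Sigma$ is generated by vectors related by a single uniform shift $\alpha \in [0,\beta]$.
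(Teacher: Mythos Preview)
The paper does not prove this statement: it is explicitly presented as a conjecture ``without proof,'' supported only by numerical experiments on $3\times 3$ matrices. So there is no argument in the paper to compare against, and what you have written is a programme rather than a proof.

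The decisive gap is the one you already flag. Your path-sum heuristic --- that for large $k$ only walks concentrated in the critical classes contribute to $(\exp(kA))^m$ at leading exponential order --- at best reproves Proposition~2.4.1, namely $\mathscr{P}_\infty(A)\in\sigma_{(max+)}(A)$. It does not determine \emph{which} point of $\sigma_{(max+)}(A)$ is selected: that selection is encoded in precisely the sub-leading (pre-exponential, combinatorial) corrections you propose to discard, and those corrections do involve the non-critical entries of $A$. Nothing in your outline explains why these contributions are invariant along a path in $\mathcal{E}(\Sigma)$, nor does the ``intrinsic characterization'' alternative go beyond a wish. There is also a secondary issue with the connectedness step: two matrices with the same tropical eigenspace need not share the same critical graph, so ``freezing the critical backbone while interpolating the rest'' is not a globally well-defined operation on $\mathcal{E}(\Sigma)$; any path may have to cross strata where the critical structure changes, and your local-constancy argument would have to survive those transitions. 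Finally, note that the paper does not even establish that the limit $\mathscr{P}_\infty(A)$ exists --- that too is asserted only on experimental grounds --- so a complete argument would have to begin there.
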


As can be seen in figures 8-9, matrices that have the same tropical eigenspace have the same $\mathscr{P}_{\infty}$. This means that $\mathscr{P}_{\infty}$ is a function of the eigenspace. Specifically, if we let $E(A) \subset \mathbb{TP}^{n-1}$ be the set of critical eigenvectors of $A$, so that $conv(E(A)) = \sigma_{(max+)}(A)$, then
$$\mathscr{P}_{\infty}(A) = f\left( conv(E(A))) \right) = g(E(A))$$
for some function $g: (\mathbb{TP}^{n-1})^n \rightarrow \mathbb{TP}^{n-1}$.
Assuming that we only care about $\mathscr{P}_{\infty}(A)$ up to projection to $\mathbb{TP}^{n-1}$, learning this function would completely determine the asymptotics.

Note that the tropical eigenspace of $A$ is often determined by only a few entries of the matrix. Classically one would expect that perturbations anywhere in the matrix should change the asymptotic behavior, but because of the $max$ operations fundamental to tropical algebra, many of the entries do not affect the eigenspace and can therefore be perturbed without affecting the convergence.

Another way to analyze the asymtotics of $A_k$ is proposed in \citep{PertTheory}. They give an algorithm for describing the first order asymptotics of eigenvectors of matrices of the form $\mathcal{A}_\epsilon$ such that
$$ (\mathcal{A}_\epsilon)_{ij} = a_{ij}\epsilon^{B_{ij}} $$
as the real parameter $\epsilon > 0$ tends to 0.

Recall that $(A_k)_{ij} = e^{kA_{ij}} = e^{(-k)(-A_{ij})} = (e^{-k})^{-A_{ij}}$.

Let $\epsilon = e^{-k}$ $(\text{so that } k\rightarrow\infty\implies\epsilon\rightarrow0)$, and $B = -A$. Then $(\mathcal{A}_\epsilon)_{ij} = 1\epsilon^{B_{ij}} = (A_k)_{ij} $, so we can analyze the asymptotics of $A_k$ using the techniques in \citep{PertTheory}.

Suppose $V_\epsilon = (w_1\epsilon^{v_1}, w_2\epsilon^{v_2}, \dots, w_n\epsilon^{v_n})$ is an eigenvector predicted by Theorem 6.1 of \citep{PertTheory}.

$$ \lim_{k\rightarrow\infty}\left[\frac{1}{k}\log(V_\epsilon)\right] = \lim_{k\rightarrow\infty}\left[\frac{1}{k} \left(\log(w_1(e^{-k})^{v_1}), \log(w_2(e^{-k})^{v_2}), \dots ,\log(w_n(e^{-k})^{v_n}) \right) \right]$$
$$ = \lim_{k\rightarrow\infty}\left[\left( -v_1 + \frac{\log(w_1)}{k}, -v_2 + \frac{\log(w_2)}{k}, \dots, -v_n + \frac{\log(w_n)}{k}  \right)\right]$$
$$ = (-v_1, -v_2, \dots, -v_n)$$

However, Theorem 6.1 does not necessarily predict the Perron Eigenvector, and for cases where any of the weights $w_i$ are zero, it fails to give any characterization of the eigenvector in question. While the set of candidate vectors proposed by the algorithm does offer a look into the asymptotic behavior of the eigenvectors of $A_k$, there are no guarantees that $\mathscr{L}(A_k)$ is one of them at all.

Even when one of the predicted vectors $(-v_1, -v_2, \dots, -v_n)$ lies in the eigenspace $\sigma_{(max+)}$, it is not guaranteed to be the Perron Eigenvector. As a counterexample, consider
$$A = \left[ \begin{matrix} 0 & -3 & -4\\ -1 & 0 & -2 \\ -1 & -1 & 0\end{matrix}\right]$$
Theorem 6.1 predicts
$$v = \left[ \begin{matrix} 1\epsilon^{-1} \\ -1\epsilon^{-1} \\ 1\epsilon^0 \end{matrix}\right] \mapsto \left[ \begin{matrix} 1\\1\\0 \end{matrix}\right] \mapsto \left[ \begin{matrix} 0\\0\\-1 \end{matrix}\right] \in \mathbb{TP}^2$$

However, while $(0,0,-1) \in \sigma_{(max+)}(A)$, \: $\mathscr{P}_\infty(A) = (0, 4, 3.5)$

\newpage

\section{Future Work}

Clearly there is more work to be done to fully characterize the Perron Eigenvector $\mathscr{P}_\infty(A)$ in the general case. It is clear that there is a relationship between the eigenspace $\sigma_{(max+)}(A)$ and the desired eigenvector, but the connection is not straightforward. The methods in \citep{PertTheory,AsymFrench} offer a potential solution, but have too many singular cases to be useful in applications in their current state. However, extensions of their algorithms to handle the remaining eigenvectors would not only solve the transfer matrix eigenvector problem, but would fully characterize the asymtotics of a wider class of matrices $A_\epsilon$.

Proofs of the conjectures in section 4 would likely shed more light on the relationship between $\mathscr{P}_\infty(A)$ and $\sigma_{(max+)}(A)$. If one can answer why the Perron Eigenvector's convergence is determined by the tropical eigenvectors of the base matrix, it is likely to lead to a characterization of the desired eigenvector itself. 

Such a characterization could potentially lead to a way to define the tropical eigenvalue/eigenvector of a tensor, see \citep{PFext}.

Python code used for analysis and visualization is available at \url{https://github.com/bkustar}.

\section*{Acknowledgements}

Special thanks to Ngoc Tran and Rutvik Choudhary.

\newpage

\nocite{AsymFrench}
\nocite{PFext}
\nocite{TropIntro}
\nocite{TropFirstSteps}
\bibliographystyle{abbrvnat}
\setcitestyle{authoryear,open={((},close={))}}
\bibliography{references}
\end{document}